\title[Milnor operations]{Milnor operations and classifying spaces}
\author{Masaki Kameko}
\address{Department of Mathematical Sciences,
College of Systems Engineering and Science, 
Shibaura Institute of Technology,
307 Minuma-ku Fukasaku, Saitama-City 337-8570, Japan}
\email{kameko@shibaura-it.ac.jp}
\thanks{This work was supported by JSPS KAKENHI Grant Number JP17K05263.}
\newtheorem{theorem}{Theorem}[section]
\newtheorem{proposition}[theorem]{Proposition}
\newtheorem{lemma}[theorem]{Lemma}
\newtheorem{conjecture}[theorem]{Conjecture}
\theoremstyle{definition}
\newtheorem{remark}[theorem]{Remark}
\newtheorem{example}[theorem]{Example}
\begin{document}
\maketitle

\begin{abstract}
We give an example of a nonzero odd degree element of the classifying space of a connected Lie group 
such that all higher Milnor operations vanish on it. It is a counterexample of a conjecture of Kono and Yagita.
\end{abstract}


\section{Introduction}   \label{section:1}

For each prime number $p$, there are the mod $p$ and Brown-Peterson cohomology. 
For a compact connected Lie group $G$,  the mod $p$ cohomology of the classifying space $BG$ has no nonzero odd degree element if the integral cohomology of  $G$ has no $p$-torsion.  So does the Brown-Peterson cohomology. On the one hand, if the integral homology of $G$ has $p$-torsion, the mod $p$ cohomology of $BG$ has a nonzero odd degree element. On the other hand,  for the Brown-Peterson cohomology, Kono and Yagita conjectured
the following:


\begin{conjecture}[Kono and Yagita, (1) in Conjecture 4 in \cite{kono-yagita-1993}]
\label{conjecture:1.1}
There is no nonzero odd degree element in the Brown-Peterson cohomology of the classifying space of a compact Lie group.
\end{conjecture}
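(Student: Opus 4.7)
Since the abstract announces Conjecture~\ref{conjecture:1.1} to be false, the plan is not to prove it but to construct a counterexample: a compact connected Lie group $G$, a prime $p$, and a nonzero element of $BP^{\mathrm{odd}}(BG)$. The starting observation is that the image of $BP^*(BG)\to H^*(BG;\mathbb{F}_p)$ is controlled by the Milnor primitives $Q_i$ through the Atiyah-Hirzebruch spectral sequence: the first differential on a mod $p$ class $x$ is, up to the standard indeterminacy, essentially $Q_1 x$, and the higher differentials are controlled by $Q_i$ with $i\geq 2$. Hence to construct a counterexample it suffices to find $G$, $p$, and $x\in H^{\mathrm{odd}}(BG;\mathbb{F}_p)$ such that $Q_i(x)=0$ for every $i\geq 1$, and then verify that the resulting permanent cycle lifts to a nonzero class in $BP^*(BG)$.

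For the first task I would choose $G$ so that $H^*(G;\mathbb{Z})$ has $p$-torsion, guaranteeing $H^{\mathrm{odd}}(BG;\mathbb{F}_p)\neq 0$; natural candidates are exceptional Lie groups at a torsion prime (for example $F_4$ at $p=3$ or $G_2$ at $p=2$), spin groups at $p=2$, or projective unitary groups $PU(n)$ with $p\mid n$, all of whose Steenrod algebra structure has been worked out in the literature. The main difficulty is producing a single $x$ annihilated by the entire infinite family $\{Q_i\}_{i\geq 1}$, not just finitely many of them. My plan is to realize $x$ functorially, either as the image of a transfer from a subgroup $H\subset G$ or as the pullback along a map $BG\to Y$ of a class in a portion of $H^*(Y;\mathbb{F}_p)$ that is $Q_i$-acyclic for all $i\geq 1$, typically because it lies in a polynomial subalgebra on whose generators every $Q_i$ vanishes for degree reasons. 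Since the $Q_i$ are derivations, such a construction forces the vanishing uniformly in $i$ rather than one $i$ at a time.

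The final and hardest step is to show that the resulting lift to $BP^*(BG)$ is actually nonzero. Vanishing of the Milnor primitives supplies a cycle on the Atiyah-Hirzebruch $E_2$-page but does not automatically prevent it from becoming a boundary at a later page, nor does it guarantee indecomposability in $BP^*(BG)$. I expect one must compare with a space whose BP cohomology is already understood, for instance a finite $p$-subgroup of $G$ or an approximating classifying space, and then use naturality of a restriction map $BP^*(BG)\to BP^*(BH)$ to detect the nontriviality of the lift. This is the step where the choice of the example has to be genuinely clever: without a structural reason, vanishing of all $Q_i$ only produces a candidate, and the whole counterexample stands or falls on showing that this candidate survives every higher differential and is nonzero in $BP^*(BG)$.
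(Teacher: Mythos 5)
There is a fundamental misreading here: the paper does \emph{not} claim to disprove Conjecture~\ref{conjecture:1.1}, and the statement you were given is left entirely open --- it is quoted as background motivation, with no proof or disproof attempted. The ``conjecture of Kono and Yagita'' refuted in the abstract is Conjecture~\ref{conjecture:1.3} (Conjecture 5 of \cite{kono-yagita-1993}), which asserts that every nonzero odd degree element $x$ of $H^{*}(BG;\mathbb{Z}/p)$ satisfies $Q_m x\neq 0$ for all sufficiently large $m$. The paper's Theorem~\ref{theorem:1.4} produces $x_{13}\in H^{13}(BG;\mathbb{Z}/2)$ with $Q_m x_{13}=0$ for all $m\geq 1$, where $G=Sp(1)^3/\Gamma_2$. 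This refutes Conjecture~\ref{conjecture:1.3} only. The logical relationship is the opposite of what you assume: Conjecture~\ref{conjecture:1.3} was the mechanism by which Kono and Yagita verified Conjecture~\ref{conjecture:1.1} in known cases (nonvanishing Milnor operations force nontrivial Atiyah--Hirzebruch differentials killing odd degree classes), so the counterexample to \ref{conjecture:1.3} merely shows that this particular strategy for proving \ref{conjecture:1.1} cannot work in general --- it does not produce a nonzero odd degree element of $BP^{*}(BG)$.

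Your first two paragraphs do land near the paper's actual construction: the paper also works with a non-simply connected quotient (though of $Sp(1)^3$, not an exceptional group or $PU(n)$), and $x_{13}=B\pi_1^{*}(Q_1 w_2)\,w_2''^2(w_2'^2+w_2''^2)$ is built so that the derivation property of the $Q_m$, together with the relation $Q_m x = w_2^{2^{m-1}}Q_{m-1}x + w_3^{2^{m-1}}Q_{m-2}x$ on $H^{*}(BSO(3);\mathbb{Z}/2)$ (Proposition~\ref{proposition:2.2}), forces $Q_m Q_1 w_2$ into the ideal $(w_3^4)$, which then dies against the relations in the subring $N$ --- this is exactly the ``uniform in $m$'' structural vanishing you anticipated. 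But your third step, lifting $x_{13}$ to a nonzero class of $BP^{*}(BG)$, is not carried out in the paper, is not claimed, and is precisely the hard open problem: vanishing of all $Q_m$ removes the known obstruction to survival in the Atiyah--Hirzebruch spectral sequence but establishes nothing about actual survival. As written, your proposal answers a different (and much harder) question than the paper does, and its final step has no support.
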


Conjecture~\ref{conjecture:1.1}  is interesting in conjunction with Totaro's conjecture on the cycle map from the Chow ring of the classifying space of a complex linear algebraic group $G$ to its Brown-Peterson cohomology. In \cite{totaro-1997}, Totaro showed that the cycle map from the Chow ring of a complex smooth algebraic variety to its ordinary cohomology factors through the Brown-Peterson cohomology after localized at $p$.  In \cite{totaro-1999}, he defined the Chow ring $CH^*(BG)$ of a linear algebraic group $G$ and conjectured the following.


\begin{conjecture}[Totaro, p.250 in \cite{totaro-1999}]\label{conjecture:1.2}
For a complex linear algebraic group $G$,
if there is no nonzero odd degree element in the Brown-Peterson cohomology $BP^{*}(BG)$, the cycle map
\[
CH^i(BG)_{(p)} \to (\mathbb{Z}_{(p)} \otimes_{BP^{*}} BP^{*}(BG))^{2i}
\]
 is an isomorphism.
\end{conjecture}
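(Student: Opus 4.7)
The proof plan is to refine the topological cycle map through an algebraic version of $BP$-theory, so that the comparison reduces to a degeneration statement controlled by the evenness hypothesis. After Levine--Morel and Voevodsky--Spitzweck, one has an algebraic Brown--Peterson cohomology $BP^*_{\mathrm{alg}}$ defined on smooth complex varieties, together with a topological realization map $\rho\colon BP^*_{\mathrm{alg}}(X)\to BP^*(X(\mathbb{C}))$. Applied to Totaro's finite-dimensional approximations $(V\setminus S)/G$ of $BG$, and then passed to the limit, the cycle map fits into a factorization
\[
CH^*(BG)_{(p)} \to BP^*_{\mathrm{alg}}(BG)\otimes_{BP^*}\mathbb{Z}_{(p)} \xrightarrow{\bar\rho} BP^*(BG)\otimes_{BP^*}\mathbb{Z}_{(p)},
\]
and it is essentially formal that the first arrow is an isomorphism. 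The conjecture thus reduces to proving that $\bar\rho$ is an isomorphism in degree $2i$, under the hypothesis that $BP^*(BG)$ has no nonzero odd-degree element.

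For this reduction, I would analyze the Atiyah--Hirzebruch-type spectral sequence relating algebraic $BP$-theory to motivic cohomology with $BP^*$-coefficients, side by side with its topological counterpart. On the topological side, evenness of $BP^*(BG)$ forces complete degeneration, and the goal is to transfer this degeneration to the algebraic side. I would exploit Totaro's description of the generators of $CH^*(BG)$ in terms of Chern classes and characteristic classes of representations, together with partial results saying that the cycle map surjects onto the image of $BP^*(BG)$ in ordinary cohomology. Combined with evenness, these should upgrade that image to the full target $BP^*(BG)\otimes_{BP^*}\mathbb{Z}_{(p)}$, and a $BP^*$-module-theoretic argument using that $BP^*(BG)$ is then a free $BP^*$-module in even degrees should force injectivity on the Chow side as well.

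The main obstacle, I expect, is controlling differentials in the algebraic spectral sequence: motivic cohomology is bigraded and genuinely richer than singular cohomology, so degeneration of the topological spectral sequence does not automatically imply degeneration of the algebraic one. One likely must invoke the Hopkins--Morel--Spitzweck identification of the slices of $MGL$ with motivic cohomology, plus a careful compatibility check between the $BP^*$-module structure and the slice filtration on $BG$. A secondary subtlety is the passage from the smooth approximations $(V\setminus S)/G$ to the ind-scheme $BG$: one must verify that $BP^*_{\mathrm{alg}}$ commutes suitably with the inverse limit and that the realization map is coherent, which may require continuity or pro-finite arguments on the motivic side before the conjecture follows.
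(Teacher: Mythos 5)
This statement is Totaro's conjecture, which the paper does not prove: it is quoted verbatim from \cite{totaro-1999} purely as motivation for studying Conjecture~\ref{conjecture:1.1}, and it remains open. So there is no proof in the paper to compare yours against, and what you have written is not a proof either --- it is a research outline whose decisive steps are exactly the open parts of the problem. Concretely: (i) the claim that $CH^*(BG)_{(p)}\to BP^*_{\mathrm{alg}}(BG)\otimes_{BP^*}\mathbb{Z}_{(p)}$ is ``essentially formal''ly an isomorphism is not formal; even for a single smooth variety the identification of the $(2*,*)$-diagonal of $MGL$ (or a $BP$-summand of it) with the Chow ring after base change along $\mathbb{L}\to\mathbb{Z}$ involves nontrivial input from the Hopkins--Morel--Hoyois theorem and the slice filtration, and for the ind-scheme $BG$ one must additionally control $\lim^1$ and Mittag--Leffler issues that you only flag at the end. (ii) The reduction to showing $\bar\rho$ is an isomorphism is essentially a restatement of the conjecture in the language of algebraic cobordism, and you yourself identify the obstruction --- that degeneration of the topological Atiyah--Hirzebruch spectral sequence under the evenness hypothesis does not transfer to the motivic/slice spectral sequence --- without giving any mechanism to overcome it. (iii) The proposed injectivity argument (``$BP^*(BG)$ is then a free $BP^*$-module in even degrees'') is also unjustified: evenness of $BP^*(BG)$ does not imply freeness over $BP^*$, and for classifying spaces of groups with torsion one expects large ideals of relations.

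You should also be aware that the paper's actual content cuts against the optimism behind this circle of ideas: its main theorem (Theorem~\ref{theorem:1.4}/Proposition~\ref{proposition:3.5}) exhibits a counterexample to the companion Conjecture~\ref{conjecture:1.3} of Kono--Yagita, by producing an odd-degree class $x_{13}\in H^{13}(BG;\mathbb{Z}/2)$ killed by all higher Milnor operations. That does not refute Conjecture~\ref{conjecture:1.2}, whose hypothesis concerns $BP^*(BG)$ rather than mod~$p$ cohomology, but it removes the main computational tool (nontriviality of $Q_m$ forcing differentials in the Atiyah--Hirzebruch spectral sequence) that one would have used to verify the evenness hypothesis in examples. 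In short: you should present this as a program, not a proof, and the honest status of each of the three steps above is ``open.''
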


With Conjectures~\ref{conjecture:1.1} and \ref{conjecture:1.2}, we expect a close connection between the Chow ring in algebraic geometry and the Brown-Peterson cohomology in algebraic topology. In \cite{kono-yagita-1993}, Kono and Yagita confirmed Conjecture~\ref{conjecture:1.1} for some compact connected Lie groups with $p$-torsion by computing the Atiyah-Hirzebruch spectral sequences. However, the non-triviality of Milnor operations on odd degree elements yields non-trivial differentials sending odd degree elements to non-zero elements, so odd degree elements do not survive to the $E_\infty$-term. With their computational results on the Brown-Peterson cohomology of classifying spaces, Kono and Yagita conjectured the following:


\begin{conjecture}[Kono and Yagita, {Conjecture 5} in \cite{kono-yagita-1993}] \label{conjecture:1.3}
For each nonzero odd degree element $x$ of the mod $p$ cohomology of the classifying space of a compact connected Lie group, there exists an integer $i$ such that for $m\geq i$, 
\[
Q_m x\not=0.
\]
\end{conjecture}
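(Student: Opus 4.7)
The abstract signals that Conjecture~\ref{conjecture:1.3} is refuted in this paper, so my plan is not to prove it but to organize the search for a counterexample: a compact connected Lie group $G$, a prime $p$, and a nonzero odd degree class $x \in H^{\mathrm{odd}}(BG;\mathbb{F}_p)$ on which $Q_m$ vanishes for all sufficiently large $m$. Natural candidates are classifying spaces whose mod $p$ cohomology has been worked out explicitly and carries a rich $p$-torsion part, for example the spin groups at $p=2$ and the exceptional groups $G_2$, $F_4$, $E_6$, $E_7$, $E_8$ at their torsion primes.

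The most economical way to annihilate a Milnor operation on an odd degree class is to realize that class as the image of another Milnor operation. I would therefore fix an even degree element $y \in H^*(BG;\mathbb{F}_p)$ and an index $j$, set $x = Q_j y$, and exploit $Q_j^2 = 0$ together with the anticommutation $Q_m Q_j + Q_j Q_m = 0$. Then $Q_j x = 0$ for free, and the remaining conditions $Q_m x = 0$ for $m \neq j$ collapse to the single condition $Q_j(Q_m y) = 0$ inside the even degree subalgebra. The whole problem is thereby shifted onto the even degree part of $H^*(BG;\mathbb{F}_p)$ and the action of Milnor operations there.

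The next step would be a structural analysis of the action of the $Q_m$ on $H^{\mathrm{even}}(BG;\mathbb{F}_p)$, relying on the fact that each $Q_m$ is a derivation. Because $x$ must be an odd degree class not pulled back from $H^*(BT;\mathbb{F}_p)$, which is concentrated in even degrees and is therefore invisible to Milnor operations, $y$ necessarily sits in the genuinely $p$-torsion part of $H^*(BG;\mathbb{F}_p)$ that is responsible for odd degree classes in the first place. My goal would then be to select $y$ so that $Q_m y$ eventually lies, for all $m$ large, in a subring $R \subset H^*(BG;\mathbb{F}_p)$ which is stable under $Q_j$ and is annihilated by $Q_j$; candidate subrings can be produced by restriction from proper subgroups whose cohomology is transparent enough to pin down the image of iterated Milnor operations.

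The crux, and the expected main obstacle, is that $Q_m x = 0$ must hold for \emph{all} sufficiently large $m$, not for finitely many values: a case by case verification cannot close the argument. What is needed is a structural or asymptotic reason, independent of $m$, that forces $Q_j(Q_m y)$ to vanish above some threshold. I expect this to come from a careful degree bookkeeping combined with an explicit description of the subring in which iterated Milnor images of $y$ are trapped, very likely obtained by restriction to a judiciously chosen subgroup inclusion $BH \to BG$ on which the targeted subring $R$ can be identified concretely and its annihilation by $Q_j$ verified uniformly in $m$.
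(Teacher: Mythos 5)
You are right that the paper refutes this conjecture rather than proving it, and your ansatz $x=Q_jy$ for an even-degree $y$ is exactly the shape of the paper's counterexample: the class is $x_{13}=B\pi_1^{*}(Q_1w_2)\cdot w_2''^2(w_2'^2+w_2''^2)=Q_1\bigl(w_2'\,w_2''^2(w_2'^2+w_2''^2)\bigr)$ in $H^{13}(BG;\mathbb{Z}/2)$ for $G=Sp(1)^3/\Gamma_2$, so $Q_1x_{13}=0$ comes for free just as you predict. But your proposal stops at the point where the real work begins, and the gap you yourself flag --- a reason, uniform in $m$, why $Q_mx=0$ for all large $m$ --- is precisely what is missing. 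The paper's mechanism has two specific ingredients that your plan does not supply. First, on $H^{*}(BSO(3);\mathbb{Z}/2)=\mathbb{Z}/2[w_2,w_3]$ the Milnor operations satisfy the Dickson-invariant recursion $Q_mx=w_2^{2^{m-1}}Q_{m-1}x+w_3^{2^{m-1}}Q_{m-2}x$ for $m\geq 2$, which forces $Q_mQ_1w_2=g_m\,w_3^4$ for \emph{every} $m\geq 2$ (Lemma~\ref{lemma:2.1}); this single divisibility statement is the uniform-in-$m$ reason. Second, one needs a group whose cohomology contains a relation annihilating $w_3'^4$ times the chosen cofactor while keeping the class itself nonzero; the paper gets this from the Serre spectral sequence for $B\mathbb{Z}/2\to BG\to BSO(3)^3$, whose differentials produce the relations $w_2'w_3''=w_2''w_3'$ and $w_3'w_3''^2=w_3''w_3'^2$ in a subring $N$ closed under the $Q_m$, so that $w_3'^4w_2''^2(w_2'^2+w_2''^2)=0$ while $x_{13}\neq 0$ in $N_1$.

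A further concrete problem with your plan is the candidate list. You propose spin groups and exceptional groups at their torsion primes, but these are exactly the simply connected cases where Kono and Yagita verified their conjecture and where Milnor operations are known to act nontrivially on odd-degree classes; the paper's introduction stresses that the conjecture is really at stake for non-simply-connected groups, and the counterexample is the quotient $Sp(1)^3/\Gamma_2$ of a product of three copies of $Sp(1)$ by a rank-two central elementary abelian $2$-group. Without the divisibility lemma on $BSO(3)$ and without a group whose cohomology carries the annihilating relation, the search you describe cannot close, so the proposal as written is a reasonable research plan but not a disproof.
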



Conjecture~\ref{conjecture:1.3} is interesting in the cohomology theory of classifying spaces of non-simply connected Lie groups. In \cite{vavpetic-viruel-2005}, Vavpeti\v{c} and Viruel showed that if $p$ is an odd prime,  Conjecture~\ref{conjecture:1.3} holds for the projective unitary group $PU(p)$. Moreover, the cohomology of classifying spaces of non-simply connected Lie groups has recently enjoyed renewed interest. Many mathematicians have studied it in various contexts. Antieau, Gu and Williams  (\cite{antieau-williams-2014}, \cite{gu-2019}, \cite{gu-2020}, \cite{gu-2022}) studied it for the topological period-index problem. Antieau, the author and Tripathy (\cite{antieau-2016}, \cite{kameko-2015}, \cite{kameko-2017}, \cite{tripathy-preprint}) studied it for integral Hodge conjecture modulo torsion. Furthermore, the Atiyah-Hirzebruch spectral sequence is used in theoretical physics to study anomalies, cf. Garc\'{\i}a-Etxebarria and Montero \cite{GM-2019}.


In this paper, we give a counterexample for Conjecture~\ref{conjecture:1.3} in the case $p=2$. 
Our result is as follows: Let $\mathbb{H}$ be the quaternions. Let $Sp(1)\subset \mathbb{H}$ be the symplectic group consisting of unit quaternions.
Let $G$ be the quotient of the 3-fold product $Sp(1)^3$ of the symplectic groups $Sp(1)$ 
by the subgroup $\Gamma_2$ generated by $(-1, -1, 1)$ and $(-1, 1 , -1)$.

\begin{theorem}\label{theorem:1.4}
In the mod $2$ cohomology of the classifying space of the compact connected Lie group $G$ above, there exists a nonzero element 
$x_{13}$ of degree $13$ such that 
\[
Q_m x_{13}=0
\]
for $m \geq 1$.
\end{theorem}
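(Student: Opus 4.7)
The plan is to produce $x_{13}$ as a polynomial in the pullbacks to $H^{*}(BG;\mathbb{F}_{2})$ of the lowest-degree Borel generators of the base of the Serre fibration
\[
BSp(1)^{3}\longrightarrow BG\longrightarrow K(\Gamma_{2},2)
\]
associated to the central extension $1\to\Gamma_{2}\to Sp(1)^{3}\to G\to 1$. Identifying $\Gamma_{2}$ with $\{(a_{1},a_{2},a_{3})\in\{\pm 1\}^{3}:a_{1}a_{2}a_{3}=1\}$, the three coordinate projections $\Gamma_{2}\to\{\pm 1\}$ dualize to $u_{1},u_{2},u_{1}+u_{2}\in H^{2}(K(\Gamma_{2},2);\mathbb{F}_{2})$, where $u_{1},u_{2}$ are the two fundamental classes; set $v_{i}=Sq^{1}u_{i}$.

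I would first compute the transgression. For the single-factor fibration $BSp(1)\to BSO(3)\to K(\mathbb{Z}/2,2)$ the equality $H^{*}(BSO(3);\mathbb{F}_{2})=\mathbb{F}_{2}[w_{2},w_{3}]$ forces $\tau(c_{2})=Sq^{2}Sq^{1}\iota$, and the Kudo transgression theorem then gives $\tau((c_{2})^{2^{m-1}})=Sq^{2^{m}}\cdots Sq^{2}Sq^{1}\iota$ for every $m\geq 1$. By naturality along the three projections $G\to SO(3)$, in our fibration
\[
\tau\bigl((c_{2}^{(i)})^{2^{m-1}}\bigr)=Sq^{2^{m}}\cdots Sq^{2}Sq^{1}\bar u_{i}
\]
with $\bar u_{1}=u_{1}$, $\bar u_{2}=u_{2}$, $\bar u_{3}=u_{1}+u_{2}$. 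Because two of these three transgressions are linearly independent for each $m\geq 1$, both base classes $Sq^{2^{m}}\cdots Sq^{2}Sq^{1}u_{1}$ and $Sq^{2^{m}}\cdots Sq^{2}Sq^{1}u_{2}$ pull back to zero in $H^{*}(BG;\mathbb{F}_{2})$.

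The next ingredient is the identity $Q_{m}u_{i}=Sq^{2^{m}}\cdots Sq^{2}Sq^{1}u_{i}$ in $H^{*}(K(\Gamma_{2},2);\mathbb{F}_{2})$, which follows by induction on $m$ from the recursion $Q_{m}=[Sq^{2^{m}},Q_{m-1}]$ together with the unstability relation $Sq^{k}u_{i}=0$ for $k>2$. Combined with naturality of $Q_{m}$ and the previous paragraph, this gives $Q_{m}u_{i}=0$ in $H^{*}(BG;\mathbb{F}_{2})$ for every $m\geq 1$; since $Q_{m}$ commutes with $Q_{0}=Sq^{1}$ modulo $2$, we also obtain $Q_{m}v_{i}=0$. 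Because each $Q_{m}$ is a derivation, every element of the subring generated by $u_{1},u_{2},v_{1},v_{2}$ is annihilated by $Q_{m}$ for all $m\geq 1$.

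Finally, I would take a nonzero element of degree $13$ in this subring, for instance $x_{13}=v_{1}u_{1}^{5}$. The main obstacle is to verify that $\mathbb{F}_{2}[u_{1},u_{2},v_{1},v_{2}]$ injects into $H^{*}(BG;\mathbb{F}_{2})$, which I would establish by checking that the image of every differential of the Serre spectral sequence landing in the base column is contained in the ideal generated by the higher Borel classes $Sq^{2^{m}}\cdots Sq^{2}Sq^{1}u_{j}$ for $m\geq 1$: using multiplicativity and the Kudo formula, the only fiber classes that transgress all the way to the base column are the squares $(c_{2}^{(i)})^{2^{k}}$, whose images are precisely these Borel classes and their base multiples. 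Consequently the pure $u_{i},v_{i}$-monomials survive to $E_{\infty}^{*,0}$, $x_{13}$ is nonzero in $H^{13}(BG;\mathbb{F}_{2})$, and $Q_{m}x_{13}=0$ for all $m\geq 1$.
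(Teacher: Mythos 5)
Your argument breaks at its central claim, namely that $Q_m u_i$ pulls back to zero in $H^{*}(BG;\mathbb{Z}/2)$ for all $m\geq 1$; this already fails for $m=1$. The source of the error is the transgression formula. In the model fibration $BSp(1)\to BSO(3)\to K(\mathbb{Z}/2,2)$ the degree-$4$ fiber generator transgresses to $\mathrm{Sq}^2\mathrm{Sq}^1\iota+\iota\,\mathrm{Sq}^1\iota$, not to $\mathrm{Sq}^2\mathrm{Sq}^1\iota$: the class $\mathrm{Sq}^2\mathrm{Sq}^1\iota=Q_1\iota$ restricts to $Q_1w_2=w_2w_3\neq0$ in $H^{*}(BSO(3);\mathbb{Z}/2)$, so it cannot lie in the kernel of the edge homomorphism, whereas $\mathrm{Sq}^2\mathrm{Sq}^1\iota+\iota\,\mathrm{Sq}^1\iota$ does. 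Kudo's theorem propagates the same correction terms to $\tau\bigl((c_2)^{2^{m-1}}\bigr)$. In your fibration over $K(\Gamma_2,2)$ the fiber $BSp(1)^3$ has cohomology concentrated in degrees divisible by $4$, so the only differential hitting degree $5$ of the base row is $d_5$, whose image is spanned by the three classes $\mathrm{Sq}^2\mathrm{Sq}^1\bar u_i+\bar u_i\,\mathrm{Sq}^1\bar u_i$; that span contains $u_1v_2+u_2v_1$ but neither $\mathrm{Sq}^2\mathrm{Sq}^1u_1$ nor $\mathrm{Sq}^2\mathrm{Sq}^1u_2$. Hence $Q_1u_i\neq0$ in $H^{*}(BG;\mathbb{Z}/2)$ (in the paper's notation $u_1,u_2,v_1,v_2$ restrict to $w_2',w_2'',w_3',w_3''$, and $Q_1w_2'=w_2'w_3'$ is a nonzero element of $N_1$), so the derivation argument that would annihilate the whole subring generated by the $u_i,v_i$ collapses. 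Incidentally, $\mathbb{Z}/2[u_1,u_2,v_1,v_2]$ also does not inject into $H^{*}(BG;\mathbb{Z}/2)$, because of the relation $u_1v_2+u_2v_1=0$ just noted.

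The defect is not merely expositional: your candidate element fails the conclusion. Writing $u_1=w_2'$ and $v_1=w_3'$, one has $x_{13}=v_1u_1^{5}=w_2'^{5}w_3'$, and while $Q_1x_{13}=0$, the recursion $Q_2x=w_2^{2}Q_1x+w_3^{2}Q_0x$, valid on classes pulled back from $BSO(3)$, gives
\[
Q_2\bigl(w_2'^{5}w_3'\bigr)=w_2'^{2}\cdot 0+w_3'^{2}\cdot w_2'^{4}w_3'^{2}=w_2'^{4}w_3'^{4},
\]
which is a basis element of $N_4$ and hence nonzero in $H^{20}(BG;\mathbb{Z}/2)$. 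The phenomenon the theorem records is more delicate than ``all higher operations vanish on the generators'': no class of degree $2$ or $3$ in $H^{*}(BG;\mathbb{Z}/2)$ is killed by all $Q_m$. What makes the paper's $x_{13}$ work is the combination of the Dickson-type relation $Q_mx=w_2^{2^{m-1}}Q_{m-1}x+w_3^{2^{m-1}}Q_{m-2}x$, which forces $Q_mQ_1w_2$ into the ideal $(w_3^4)$ for $m\geq2$, with the relations $w_2'w_3''=w_2''w_3'$ and $w_3'w_3''^{2}=w_3'^{2}w_3''$ in $H^{*}(BG;\mathbb{Z}/2)$, which kill the specific product $w_3'^{4}w_2''^{2}(w_2'^{2}+w_2''^{2})$. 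Some substitute for that mechanism is needed; the blanket transgression argument cannot supply it.
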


This paper is organized as follows. In Section~\ref{section:2}, we describe the action of Milnor operations on the mod $2$ cohomology of $BSO(3)$.
In Section~\ref{section:3}, we prove Theorem~\ref{theorem:1.4} as Proposition~\ref{proposition:3.5}.

The author would like to thank the anonymous referee for suggestions to improve the exposition of this paper. 

\section{Milnor operations}   \label{section:2}


In this section, we recall Milnor operations \[
Q_m\colon H^{i}(X;\mathbb{Z}/2)\to H^{i+2^m+1}(X;\mathbb{Z}/2)
\]
and the mod $2$ cohomology of the classifying space $BSO(3)$. 
Milnor operations $Q_m$ are defined by
\[
Q_0=\mathrm{Sq}^1, \quad Q_m=\mathrm{Sq}^{2^{m}} Q_{m-1}+Q_{m-1} \mathrm{Sq}^{2^m} \quad (m\geq 1).
\]
They have the following properties:
\begin{align*}
Q_m Q_n &= Q_n Q_m, \\
 Q_m^2 &=0, 
\end{align*}
and 
\begin{align*}
Q_m(x \cdot y)&=(Q_m x) \cdot y+ x \cdot (Q_m y).
\end{align*}
These formulae are essential in our proofs Propositions~\ref{proposition:2.2} and \ref{proposition:3.5}.
The mod $2$ cohomology of $BSO(3)$ is a polynomial ring
\[
H^{*}(BSO(3);\mathbb{Z}/2)=\mathbb{Z}/2[w_2, w_3]
\]
generated by two elements $w_2$, $w_3$ of degree $2$, $3$, respectively.
The action of Steenrod squares on these elements is well-known as the Wu formula. In particular, we have
\begin{align*}
\mathrm{Sq}^1 w_2&=w_3, & \mathrm{Sq}^2 w_2 &=w_2^2, \\
\mathrm{Sq}^1 w_3&=0, & \mathrm{Sq}^2 w_3 &=w_2 w_3.
\end{align*}
By the Wu formula and by the definition and elementary properties of Milnor operations stated above, 
 it is easy to obtain 
\begin{align*}
Q_0 w_2&= w_3, & Q_1 w_2&=w_2 w_3, & 
Q_0 Q_1 w_2&=w_3^2,\\
Q_0 w_3&= 0, & Q_1 w_3&=w_3^2, & 
Q_0 Q_1 w_3&=0.
\end{align*}
This section aims to prove the following lemma on the action of Milnor operations on the mod $2$ cohomology of $BSO(3)$. 


\begin{lemma}\label{lemma:2.1}
For $m\geq 2$, there exists a polynomial $g_m$ in $w_2^2$ and $w_3^2$ such that 
we have 
\[
Q_m Q_1w_2=g_m w_3^4.
\]
in the mod $2$ cohomology of $BSO(3)$.
\end{lemma}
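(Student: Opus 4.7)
The plan is to prove the lemma by induction on $m \geq 2$. The starting identity is $Q_m Q_1 w_2 = Q_m(w_2 w_3)$, which one attacks via the derivation property of $Q_m$ together with the defining recursion $Q_m = \mathrm{Sq}^{2^m} Q_{m-1} + Q_{m-1}\mathrm{Sq}^{2^m}$ and the relation $Q_1^2 = 0$.

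For the base case $m = 2$, I would compute $Q_2 Q_1 w_2$ directly using $Q_2 = \mathrm{Sq}^4 Q_1 + Q_1 \mathrm{Sq}^4$. Since $Q_1^2 = 0$, only $Q_1 \mathrm{Sq}^4(w_2 w_3)$ survives. The Cartan and Wu formulas recalled above give $\mathrm{Sq}^4(w_2 w_3) = w_3^3 + w_2^3 w_3$, and applying $Q_1$ as a derivation (with $Q_1 w_2 = w_2 w_3$ and $Q_1 w_3 = w_3^2$) yields $w_3^4$. So one may take $g_2 = 1$.

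For the inductive step ($m \geq 3$), assume $Q_{m-1} Q_1 w_2 = g_{m-1} w_3^4$ with $g_{m-1} \in \mathbb{Z}/2[w_2^2, w_3^2]$. Applying the recursion to $Q_1 w_2 = w_2 w_3$ gives
\[
Q_m Q_1 w_2 = \mathrm{Sq}^{2^m}(g_{m-1} w_3^4) + Q_{m-1}\mathrm{Sq}^{2^m}(w_2 w_3).
\]
The second term vanishes because $|w_2 w_3| = 5 < 2^m$ for $m \geq 3$. For the first, since $g_{m-1}$ is a polynomial in the squares $w_2^2, w_3^2$, it is itself the Frobenius square of some $\tilde{g}_{m-1} \in \mathbb{Z}/2[w_2, w_3]$; hence $g_{m-1} w_3^4 = (\tilde{g}_{m-1} w_3^2)^2$, and the identity $\mathrm{Sq}^{2k}(y^2) = (\mathrm{Sq}^k y)^2$ reduces the problem to analyzing $\mathrm{Sq}^{2^{m-1}}(\tilde{g}_{m-1} w_3^2)$. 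Expanding by Cartan, and using that $\mathrm{Sq}^j w_3^2 = (\mathrm{Sq}^{j/2} w_3)^2$ for $j$ even and vanishes for $j$ odd, only the indices $j \in \{0, 4, 6\}$ contribute nonzero values $w_3^2$, $w_2^2 w_3^2$, $w_3^4$, all divisible by $w_3^2$. Thus $\mathrm{Sq}^{2^{m-1}}(\tilde{g}_{m-1} w_3^2) = w_3^2 s_m$ for some $s_m \in \mathbb{Z}/2[w_2, w_3]$; squaring produces $Q_m Q_1 w_2 = w_3^4 s_m^2$, and $g_m := s_m^2$ automatically lies in $\mathbb{Z}/2[w_2^2, w_3^2]$.

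The only real point to be careful about is the short list of nonzero $\mathrm{Sq}^j w_3^2$, which rests on $|w_3|=3$ and the vanishing $\mathrm{Sq}^1 w_3 = 0$. Once this is in hand, the square-of-a-square mechanism delivers both the required divisibility by $w_3^4$ and the polynomial-in-squares form of $g_m$ for free, so I do not anticipate any substantive obstacle beyond careful bookkeeping in the Cartan expansion.
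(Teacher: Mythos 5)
Your proof is correct, and it takes a genuinely different route from the paper. The paper first proves (by restricting to the elementary abelian subgroup $(\mathbb{Z}/2)^2 \subset SO(3)$ and using injectivity of $B\iota^{*}$) the recursion $Q_m x = w_2^{2^{m-1}}Q_{m-1}x + w_3^{2^{m-1}}Q_{m-2}x$ on all of $H^{*}(BSO(3);\mathbb{Z}/2)$, packages it into a $2\times 2$ matrix product $A_m$ to write $Q_m = f_{m,1}Q_1 + f_{m,0}Q_0$ with explicit polynomial coefficients, and then reads off $f_{m,0}\equiv 0 \bmod (w_3^2)$ by reducing the matrices modulo $w_3^2$; the lemma follows from $Q_1Q_1=0$ and $Q_0Q_1w_2 = w_3^2$. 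You instead induct on $m$ using only the defining recursion $Q_m = \mathrm{Sq}^{2^m}Q_{m-1} + Q_{m-1}\mathrm{Sq}^{2^m}$, instability (to kill $\mathrm{Sq}^{2^m}(w_2w_3)$ for $m\geq 3$), and the Frobenius mechanism $\mathrm{Sq}^{2k}(y^2)=(\mathrm{Sq}^k y)^2$ combined with the short list of nonzero $\mathrm{Sq}^j w_3^2$, each divisible by $w_3^2$. I checked your base case and the squaring argument; they are sound, and your recursion reproduces the paper's examples ($g_2=1$, $g_3=w_2^4$, $g_4=w_2^{12}+w_3^8$). Your argument is more elementary and self-contained --- it never leaves $\mathbb{Z}/2[w_2,w_3]$ and needs no detour through $B(\mathbb{Z}/2)^2$ --- but it is tailored to the single element $Q_1w_2$; the paper's approach buys a description of $Q_m$ on the entire cohomology ring of $BSO(3)$ in terms of $Q_0$ and $Q_1$, which is of independent interest and connects to the Dickson-invariant picture the author wishes to highlight.
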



To prove Lemma~\ref{lemma:2.1}, we recall the relation between Dickson invariants and Milnor operations as Proposition~\ref{proposition:2.2}. 
The connection between Dickson invariants and Milnor operations 
is an exciting subject in algebraic topology. Thus, we refer the reader to the classical work of Adams and Wilkerson (\cite{adams-wilkerson-1980}, \cite{wilkerson-1983}) for more detail on the background of this section. However, to make this paper self-contained as far as possible, we give detailed proof for Lemma~\ref {lemma:2.1} without mentioning Dickson invariants and the above background. 

Let $(\mathbb{Z}/2)^2=\mathbb{Z}/2\times \mathbb{Z}/2$ be the elementary abelian $2$-subgroup of $SO(3)$ generated by
diagonal matrices
\[
\begin{pmatrix} -1 & 0 & 0 \\ 0 & -1 & 0 \\ 0 & 0 & 1 \end{pmatrix}
, 
\quad
\begin{pmatrix} -1 & 0 & 0 \\ 0 & 1 & 0 \\ 0 & 0 & -1 \end{pmatrix}.
\]
We denote by $\iota\colon (\mathbb{Z}/2)^2 \to SO(3)$ the inclusion map.
The induced homomorphism 
\[
B\iota^{*}\colon H^{*}(BSO(3);\mathbb{Z}/2)\to H^{*}(B(\mathbb{Z}/2)^2;\mathbb{Z}/2)
\]
is injective, and its image is the subring generated by the following elements.
\begin{align*}
B\iota^{*}(w_2)&=s_1^2 + s_1 s_2 + s_2^2,
\\
B\iota^{*}(w_3)&=s_1^2s_2 + s_1 s_2^2.
\end{align*}


\begin{proposition}\label{proposition:2.2}
Suppose that $m\geq 2$.
For an element $x$ of the mod $2$ cohomology of $B(\mathbb{Z}/2)^2$, 
let
\[
D_m x=Q_m x + B\iota^{*}(w_2^{2^{m-1}}) Q_{m-1}x +B\iota^{*}(w_3^{2^{m-1}}) Q_{m-2}x.
\]
Then, we have
\[
D_m x=0.
\]
\end{proposition}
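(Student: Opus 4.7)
The plan is to show that $D_{m}$ is a derivation on $H^{*}(B(\mathbb{Z}/2)^{2};\mathbb{Z}/2)=\mathbb{Z}/2[s_{1},s_{2}]$, whereupon it suffices to check the identity on the generators $s_{1}$ and $s_{2}$. Since each Milnor operation $Q_{i}$ satisfies the Leibniz rule, and the remaining terms defining $D_{m}$ are obtained from $Q_{m-1}$ and $Q_{m-2}$ by multiplication by the fixed classes $B\iota^{*}(w_{2}^{2^{m-1}})$ and $B\iota^{*}(w_{3}^{2^{m-1}})$, a direct application of the product rule gives
\[
D_{m}(xy)=(D_{m}x)\,y+x\,(D_{m}y).
\]
By symmetry of the Dickson-type classes in $s_{1}$ and $s_{2}$, it is then enough to verify $D_{m}(s_{1})=0$.

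Next I would establish the formula $Q_{m}(s_{i})=s_{i}^{2^{m+1}}$ by a short induction on $m$. The base case $Q_{0}s_{i}=\mathrm{Sq}^{1}s_{i}=s_{i}^{2}$ is immediate, and the inductive step uses the defining recursion $Q_{m}=\mathrm{Sq}^{2^{m}}Q_{m-1}+Q_{m-1}\mathrm{Sq}^{2^{m}}$ together with the unstability relations $\mathrm{Sq}^{2^{m}}(s_{i})=0$ for $m\geq 1$ and $\mathrm{Sq}^{2^{m}}(s_{i}^{2^{m}})=s_{i}^{2^{m+1}}$. Combined with the Frobenius in characteristic $2$, this also yields the clean expressions
\begin{align*}
B\iota^{*}(w_{2})^{2^{m-1}}&=s_{1}^{2^{m}}+s_{1}^{2^{m-1}}s_{2}^{2^{m-1}}+s_{2}^{2^{m}},\\
B\iota^{*}(w_{3})^{2^{m-1}}&=s_{1}^{2^{m}}s_{2}^{2^{m-1}}+s_{1}^{2^{m-1}}s_{2}^{2^{m}}.
\end{align*}

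The final step is to substitute these into $D_{m}(s_{1})$ and verify that every monomial appears an even number of times. The term $Q_{m}(s_{1})=s_{1}^{2^{m+1}}$ pairs with the contribution $s_{1}^{2^{m}}\cdot Q_{m-1}(s_{1})=s_{1}^{2^{m+1}}$ coming from the leading summand of $B\iota^{*}(w_{2})^{2^{m-1}}$; the mixed monomial $s_{1}^{2^{m}+2^{m-1}}s_{2}^{2^{m-1}}$ appears once from the middle term of $B\iota^{*}(w_{2})^{2^{m-1}}\cdot Q_{m-1}(s_{1})$ and once from the first term of $B\iota^{*}(w_{3})^{2^{m-1}}\cdot Q_{m-2}(s_{1})$; and the monomial $s_{1}^{2^{m}}s_{2}^{2^{m}}$ appears once from the last term of $B\iota^{*}(w_{2})^{2^{m-1}}\cdot Q_{m-1}(s_{1})$ and once from the second term of $B\iota^{*}(w_{3})^{2^{m-1}}\cdot Q_{m-2}(s_{1})$. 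Everything cancels mod $2$.

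The main obstacle is not conceptual but bookkeeping: tracking the exponents carefully so that the three sources of monomials match up in the right pairs. Once the derivation property is noted, the proof reduces to this tidy explicit cancellation on $s_{1}$.
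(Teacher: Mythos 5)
Your proposal is correct and follows essentially the same route as the paper: both reduce to the generators $s_1,s_2$ via the Leibniz rule for $D_m$ and then verify $D_m s_i=0$ by direct computation using $Q_j s_i = s_i^{2^{j+1}}$. The only cosmetic difference is that the paper organizes the final cancellation by recognizing $D_m s_i$ as the $2^{m-1}$-st power of $s_i^4+B\iota^{*}(w_2)s_i^2+B\iota^{*}(w_3)s_i=0$, whereas you expand and cancel monomials term by term.
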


\begin{proof}
Here, in the proof of Proposition~\ref{proposition:2.2}, by the mod $2$ cohomology ring, we mean the mod $2$ cohomology ring of $B(\mathbb{Z}/2)^2$
unless otherwise stated explicitly. 
Recall that 
\[
Q_m(x\cdot y)=(Q_m x) \cdot y+ x \cdot (Q_my),
\]
for $x, y \in H^{*}(X;\mathbb{Z}/2)$.
For $i=1$ and $2$, we have
\begin{align*}
D_m s_i&=Q_m  s_i+ B\iota^*(w_2^{2^{m-1}} ) Q_{m-1} s_i+ B\iota^{*}(w_3^{2^{m-1}}) Q_{m-2} s_i 
\\
&=s_i^{2^{m+1}} +(s_1^2+s_1s_2+s_2^2)^{2^{m-1}} s_i^{2^{m}} +(s_1^2 s_2+ s_1 s_2^2)^{2^{m-1}} s_i^{2^{m-1}}\\
&=\left( s_i^4 +(s_1^2+s_1s_2+s_2^2) s_i^{2} + (s_1^2 s_2+ s_1 s_2^2) s_i\right)^{2^{m-1}}
\\
&=0.
\end{align*}
Thus, for elements $x$, $y$ in the mod $2$ cohomology ring, we have
\[
D_m(x\cdot y)=D_m x\cdot y+ x \cdot D_m y.
\]
Therefore, since the mod $2$ cohomology ring is generated by $s_1$, $s_2$, 
the fact that $D_m s_i =0$ for $i=1,2$ implies that $D_m x=0$ for each element 
$x$ in the mod $2$ cohomology ring. 
\end{proof}



Now,  for $m\geq 2$, we describe the action of the Milnor operation $Q_m$ in terms of certain polynomials $f_{m,0}$, $f_{m.1}$ in $w_2^2$ and $w_3^2$ and Milnor operations $Q_0$, $Q_1$. Since the induced homomorphism 
\[
B\iota^{*}\colon H^{*}(BSO(3);\mathbb{Z}/2)
\to H^{*}(B(\mathbb{Z}/2)^2;\mathbb{Z}/2)
\]
is injective, by Proposition~\ref{proposition:2.2}, for each $x$ in the mod $2$ cohomology of $BSO(3)$, we have 
\[
Q_m x= w_2^{2^{m-1}} Q_{m-1}x+ w_3^{2^{m-1}} Q_{m-2}x.
\]
We may write it in the following form.
\[
\begin{pmatrix}
Q_m x\\ Q_{m-1}x
\end{pmatrix}
=
\begin{pmatrix} w_2^{2^{m-1}} & w_3^{2^{m-1}} \\ 1 & 0 \end{pmatrix}
\begin{pmatrix}
Q_{m-1}x \\ Q_{m-2}x
\end{pmatrix}.
\]
Let us define a matrix $A_m$ whose coefficients are polynomials in $w_2^2$, $w_3^2$ as follows:
\[
A_m =\begin{pmatrix} w_2^{2^{m-1}} & w_3^{2^{m-1}} \\ 1 & 0  \end{pmatrix}
\begin{pmatrix} w_2^{2^{m-2}} & w_3^{2^{m-2}} \\ 1 & 0  \end{pmatrix}
\cdots \begin{pmatrix} w_2^{4} & w_3^{4} \\ 1 & 0  \end{pmatrix}
\begin{pmatrix} w_2^{2} & w_3^{2} \\ 1 & 0  \end{pmatrix}.
\]
Furthermore, let us define polynomials $f_{m,0}$, $f_{m,1}$ by 
\[
\begin{pmatrix} f_{m,1} & f_{m,0} \end{pmatrix}= 
\begin{pmatrix}1 & 0  \end{pmatrix} A_m.
\]
Then, for $x$ in the mod $2$ cohomology of $BSO(3)$, 
we have 
\[
Q_m x= \begin{pmatrix} 1 & 0 \end{pmatrix} \begin{pmatrix} Q_{m} x\\ Q_{m-1} x \end{pmatrix}  =\begin{pmatrix} 1 & 0 \end{pmatrix} A_m \begin{pmatrix} Q_1 x\\ Q_0 x \end{pmatrix} =f_{m,1} Q_1x + f_{m,0} Q_0x.
\]




\begin{proof}[Proof of Lemma~\ref{lemma:2.1}]
We have the following congruence.
\[
A_m \equiv \begin{pmatrix} w_2^{2^{m-1}} & 0 \\ 1 & 0  \end{pmatrix}
\cdots
\begin{pmatrix} w_2^{2} & 0 \\ 1 & 0  \end{pmatrix}
\equiv \begin{pmatrix}
w_2^{2^{m}-2} & 0 \\
w_2^{2^{m-1}-2} & 0 
\end{pmatrix} \mod (w_3^2).
\]
Hence, we have $f_{m,0}\equiv 0 \mod (w_3^2)$. Therefore, there exists a polynomial $g_m$ in $w_2^2$ and $w_3^2$ such that 
\[
f_{m,0}=g_m w_3^2.
\]
Recall the fact that $Q_0Q_1 w_2=w_3^2$ and $Q_1 Q_1=0$. 
Then, we have
\begin{align*}
Q_mQ_1 w_2&=  f_{m,1}Q_1 Q_1w_2 + f_{m,0} Q_0Q_1 w_2
\\
&=f_{m,0} Q_0Q_1 w_2
\\
&=g_m  w_3^4. \qedhere 
\end{align*}
\end{proof}



\begin{example}
For $m=2, 3, 4$, elements $Q_mx$ and polynomials $g_m$ in Lemma~\ref{lemma:2.1} are as follows:
\begin{align*}
Q_2 x&= w_2^2 Q_1 x+ w_3^2 Q_0 x, & g_2&=1, \\
Q_3 x&= (w_2^6+w_3^4) Q_1x + w_2^4 w_3^2 Q_0 x, & g_3 &= w_2^4, \\
Q_4 x &= (w_2^{14}+w_2^8 w_3^4+ w_2^2 w_3^8) Q_1 x + (w_2^{12}+w_3^8)w_3^2 Q_0 x, & g_4&= w_2^{12}+w_3^8.
\end{align*}
\end{example}

\section{The nonzero odd degree element}   \label{section:3}



In this section, we prove Theorem~\ref{theorem:1.4} as Proposition~\ref{proposition:3.5}.

We begin with recalling the definition of the connected Lie group $G$ in Section~\ref{section:1} and set up notations.
Let us consider the $3$-fold product of symplectic groups $Sp(1)\subset \mathbb{H}$ consisting of 
unit quaternions.
Let
\[
\Gamma_3=\{ (\pm 1, \pm 1, \pm 1 )\}
\]
be the center of $Sp(1)^3$.
Let
$
\Gamma_2
$
be its subgroup generated by $ (-1, 1, -1), (1, -1, -1)$ 
and
\[
G=Sp(1)^3/\Gamma_2.
\]
Let $\mathbb{Z}/2=\{ (\pm 1, 1, 1) \} \subset \Gamma_3$. Then, 
$\mathbb{Z}/2$ and $\Gamma_2$ generate $\Gamma_3$. Moreover, we have
\[
Sp(1)^3/\Gamma_3=SO(3)^3.
\]
Therefore, we have the following fiber sequence:
\[
B\mathbb{Z}/2 \to BG\to BSO(3)^3.
\]

We denote by 
\[
\{ 
{E}_r^{p,q}, {d}_r\colon {E}_r^{p,q}\to {E}_r^{p+r,q-r+1}
\}
\]
 the Leray-Serre spectral sequence associated with this fiber sequence.
 Let us denote its $E_r$-term by 
 \[
E_r= \bigoplus_{p,q} E_r^{p,q}.
 \]
We compute the mod $2$ cohomology of $BG$ using the above Leray-Serre spectral sequence.
Although it is easy, we quickly review it. See \cite{kameko-preprint} for more detail.



We describe the $E_2$-term and compute the first non-trivial differential $d_2$.
Let 
\[
B\pi_i\colon BSO(3)^3\to BSO(3)
\]
be the map induced by the projection to the $i^{\mathrm{th}}$ factor for $i=1, 2, 3$.
We denote by $w_i'$, $w_i''$, $w_i'''$ the cohomology classes $B\pi_1^{*}(w_i)$, $B\pi_2^{*}(w_i)$, $B\pi_3^{*}(w_{i})$, respectively. Let $u_1$ be the generator of the mod $2$ cohomology $H^{1}(B\mathbb{Z}/2;\mathbb{Z}/2)\cong\mathbb{Z}/2$ of the fibre $B\mathbb{Z}/2$.
The $E_2$-term is given by
\begin{align*}
E_2&=\mathbb{Z}/2[ w_2', w_3', w_2'', w_3'', w_2''', w_3''']\otimes \mathbb{Z}/2[u_1]
\end{align*}
To compute the differential ${d}_2$, we consider the Leray-Serre spectral sequence 
\[
\{ 
\bar{E}_r^{p,q}, \bar{d}_r\colon \bar{E}_r^{p,q}\to \bar{E}_r^{p+r,q-r+1}
\}
\]
associated with the fiber sequence
\[
B\mathbb{Z}/2 \to BSp(1) \to BSO(3).
\]
Recall that its $E_2$-term is given as follows:
\[
\bar{E}_2=\mathbb{Z}/2[ w_2, w_3] \otimes \mathbb{Z}/2[u_1].
\]
and its first nontrivial differential $\bar{d}_2$ is given by
\[
\bar{d}_2(u_1)=w_2.
\]
Let 
\[
B\iota_i\colon BSp(1) \to BG
\]
be the map induced by the inclusion map $\iota_i$ of $Sp(1)$ for $i=1,2,3$ such  that 
\[
\iota_1(g)=(g, 1, 1), \quad \iota_2(g)=(1, g, 1), \quad \iota_3(g)=(1, 1, g).
\]
Then we have the following commutative diagram,
\[
\begin{diagram}
\node{B\mathbb{Z}/2} \arrow{e} \arrow{s,l}{=} \node{BSp(1)} \arrow{e} \arrow{s,r}{B\iota_i} \node{BSO(3)} \arrow{s,r}{B\iota_i} \\
\node{B\mathbb{Z}/2} \arrow{e}  \node{BG} \arrow{e}  \node{BSO(3)^3.}
\end{diagram}
\]
Furthermore, we have 
\[
\begin{array}{lll}
B\iota_1^*(w_2')= w_2, & B\iota_1^*(w_2'')= 0, & B\iota_1^*(w_2''')= 0, \\ 
B\iota_2^*(w_2')= 0, & B\iota_2^*(w_2'')= w_2, & B\iota_2^*(w_2''')= 0, \\ 
B\iota_3^*(w_2')= 0, & B\iota_3^*(w_2'')= 0, & B\iota_3^*(w_2''')= w_2.
\end{array}
\]
Now, we are ready to compute the differential $d_2$.
Suppose that the first nontrivial differential $d_2$ is given as follows:
\begin{align*}
d_2(u_1)&=\alpha_1 w_2'+\alpha_2 w_2''+ \alpha_3w_2''',
\end{align*}
where $\alpha_1, \alpha_2, \alpha_3$ are in $\mathbb{Z}/2$.
Since 
\[
B\iota_i^*(d_2(u))=\alpha_i w_2, 
\]
and 
\[
\bar{d}_2(u_1)=w_2, 
\]
we obtain 
\[
\alpha_i=1
\]
for $i=1,2,3$. 
Thus, 
the first nontrivial differential $d_2\colon E_2^{0,1}\to E_2^{2,0}$ is given as follows:
\begin{align*}
d_2(u_1)&=w_2'+w_2''+ w_2'''.
\end{align*}



Let us recall the relation between the transgression and Steenrod squares. 
For $r\geq 2$, the transgression 
\[
d_r\colon E_r^{0,r-1}\to E_r^{r,0}
\]
commutes with Steenrod squares $\mathrm{Sq}^i$.  In other words, if $d_r(x)=y$ then
we may have an element $\mathrm{Sq}^i x \in E_s^{0,r-1+i}$ for $r\leq s $, 
an element  $\mathrm{Sq}^i y\in E_{r+i}^{r+i, 0}$ and there hold that
$d_{s}(\mathrm{Sq}^i x)=0$ for $r\leq s<r+i$ and that $d_{r+i}(\mathrm{Sq}^i x)=\mathrm{Sq}^i y$. 

Starting with the above $E_2$ and $d_2$, since $u_1^{2}=\mathrm{Sq}^1 u_1$, $u_1^{4}=\mathrm{Sq}^2 u_1^2$, and $u_1^{8}=\mathrm{Sq}^4 u_1^4$,
we have the following $E_r$-tems and differentials up to $r\leq 9$.
\begin{align*}
E_3&=\mathbb{Z}/2[w_2', w_3',  w_2'', w_3'', w_3''']\otimes \mathbb{Z}/2[ u_1^2], \\
d_3(u_1^2)&=\mathrm{Sq}^1(w_2' +w_2''+ w_2''')
\\
&=w_3' +w_3''+ w_3''', \\
E_4&=\mathbb{Z}/2[w_2', w_3',  w_2'', w_3'']\otimes \mathbb{Z}/2[ u_1^4], \\
d_4(u_1^4)&=0, \\
E_5&=E_4, \\
d_5(u_1^4)&=\mathrm{Sq}^2 (w_3' +w_3''+ w_3''') \\
&=w_2'w_3'+ w_2'' w_3'' + w_2''' w_3'''
\\
& =w_2' w_3''+ w_2'' w_3', 
\\
E_6&=\mathbb{Z}/2[w_2', w_3', w_2'', w_3'']/(w_2' w_3''+ w_2'' w_3') \otimes \mathbb{Z}/2[ u_1^8], \\
d_6(u_1^8)&=0, \\
d_7(u_1^8)&=0, \\
d_8(u_1^8)&=0, \\
E_9&=E_6.
\end{align*}
To compute higher terms and differentials, 
let us consider the ring homomorphism
\[
\phi\colon \mathbb{Z}/2[ w_2', w_3',  w_2'', w_3'']\to \mathbb{Z}/2[w_2', w_2'', t_1]
\]
defined by
\begin{align*}
\phi(w_2')&=w_2', \\
\phi(w_3')&=t_1 w_2', \\
\phi(w_2'')&=w_2'', \\
\phi(w_3'')&=t_1 w_2''.
\end{align*}
We assign weight $0$, $1$, $0$, $1$ to $w_2'$, $w_3'$, $w_2''$, $w_3''$, respectively. 
We also assign weight $1$ to $t_1$. 
Then, the ring homomorphism $\phi$ is weight-preserving.

Let 
\[
M=\mathbb{Z}/2[ w_2', w_3', w_2'' , w_3'']/( w_2'w_3''+w_2''w_3')
.
\]
It is the bottom line of the $E_9$-term of the spectral sequence such that 
\[
M=\bigoplus_{p} E_9^{p,0}.
\]
The ring homomorphism $\phi$ induces the weight-preserving ring homomorphism
\[
\bar{\phi}\colon M \to \mathbb{Z}/2[w_2', w_2'', t_1].
\]
It is clear that the ring homomorphism $\bar{\phi}$ is injective. 
Thus, $M$ is isomorphic to the subring  $\bar{\phi}(M)$ of $\mathbb{Z}/2[ w_2', w_2'', t_1]$. Therefore, both $M$ and $\bar{\phi}(M)$ are integral domains. 

The next nontrivial differential is $d_9$. It is given by 
\begin{align*}
d_9(u_1^8)&=\mathrm{Sq}^4 (w_2' w_3''+ w_2'' w_3')
\\
&=w_2'^2 w_2'' w_3''+w_3' w_3''^2 +w_2''^2 w_2' w_3'+w_3'' w_3''^2
\\
&=w_2'w_2''(w_2'w_3''+w_2''w_3')+w_3' w_3''^2 + w_3'' w_3'^2
\\
&=w_3' w_3''^2 + w_3'' w_3'^2.
\end{align*}
Since 
\[
\bar{\phi}(w_3' w_3''^2 + w_3'' w_3'^2)=t_1^3 w_2'w_2''(w_2'+w_2'')
\]
is nonzero 
in $\mathbb{Z}/2[ w_2', w_2'', t_1]$, multiplication by $w_3' w_3''^2 + w_3'' w_3'^2$ is injective on $M$. Therefore, we have 
\begin{align*}
E_{10}=& \mathbb{Z}/2[ w_2', w_3', w_2'', w_3'']/(w_2'w_3''+ w_2'' w_3', w_3'w_3''^2+ w_3'' w_3'^2) \otimes \mathbb{Z}/2[u_1^{16}]. \end{align*}
We would like to point out that $w_2'w_3''+ w_2'' w_3', w_3'w_3''^2+ w_3'' w_3'^2$ is a regular sequence in the polynomial ring $\mathbb{Z}/2[w_2', w_2'', w_3', w_3'']$.

Finally, using the commutativity between the transgression and Steenrod squares again, we have
\begin{align*}
d_r(u_1^{16})&=0 \quad \mbox{for $10\leq r \leq 16$ and}
\\
d_{17}(u_1^{16})&=\mathrm{Sq}^{8} (w_3'w_3''^2 + w_3'^2w_3'')
\\
&=w_2'w_3' w_3''^4+ w_2'' w_3'' w_3'^4 
\\
&=(w_2' w_3''+w_2'' w_3') w_3' w_3''^3+ w_2'' w_3'(w_3'+w_3'') (w_3' w_3''^2+w_3'^2 w_3'')
\\
&=0.
\end{align*}
Hence, we have $d_r=0$ for $r\geq 10$ and $E_\infty=E_{10}$.


To describe the $E_\infty$-term, let 
\[
N=\mathbb{Z}/2[ w_2', w_3', w_2'', w_3'']/(w_2' w_3''+w_2''w_3', w_3'w_3''^2 + w_3'' w_3'^2).
\]
It is the bottom line of the $E_\infty$-term of the spectral sequence such that 
\[
N=\bigoplus_p E_\infty^{p,0}.
\]
It is also the subring
of the mod $2$ cohomology ring of $BG$ 
generated by $w_2', w_3', w_2'', w_3''$.
What we need is the fact that the induced homomorphism 
\[
N\to H^{*}(BG;\mathbb{Z}/2)
\]
is injective, and $N$ is closed under the action of Milnor operations $Q_m$ for $m\geq 0$.


For a graded set $\{ x_1, x_2, \dots \}$, we denote by $\mathbb{Z}/2\{ x_1, x_2, \dots \}$ the graded $\mathbb{Z}/2$-module spanned by
$\{ x_1, x_2, \dots \}$.
Recall that we defined weight of $w_2', w_3', w_2'', w_3'', t_1$ as $0,1,0,1,1$, respectively. 
We have direct sum decompositions of $M$ and $N$ with respect to weight. 
Namely, $M_k$, $N_k$ are graded submodules of $M$, $N$ spanned by monomials of weight $k$, respectively. 

We will define the element $x_{13}$ as an element in $N_1$. We also need the following Proposition~\ref{proposition:3.1}  on the basis for $N_1$ to show that $x_{13}$ is nonzero.


\begin{proposition}\label{proposition:3.1} For $N_0, N_1, N_2$, we have
\begin{align*}
N_0&=\mathbb{Z}/2\{ w_2'^mw_2''^n\;|\; m, n \geq 0\}, \\
N_1&=\mathbb{Z}/2\{ w_2'^m w_3', w_2'^m w_2''^n w_3''\; |\; m, n \geq 0\}, \\
N_2&=\mathbb{Z}/2\{ w_2'^m w_3'^2, w_2'^m w_2''^n w_3' w_3'', w_2''^n w_3''^2 \;|\; m, n \geq 0\}.
\end{align*}
\end{proposition}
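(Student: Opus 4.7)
The plan is to exploit the fact that the second defining relation of $N$, namely $w_3' w_3''^2 + w_3'' w_3'^2$, is homogeneous of weight $3$ and therefore contributes nothing in weights $0$, $1$, or $2$. Hence $N_k = M_k$ for $k = 0, 1, 2$, where $M$ is the ring introduced just before the proposition. This reduces the problem to one about $M$, which comes equipped with the injective weight-preserving ring homomorphism $\bar\phi\colon M \to \mathbb{Z}/2[w_2', w_2'', t_1]$ constructed earlier. The case of $N_0$ is then immediate, since neither defining relation of $N$ has weight $0$, so $N_0 = \mathbb{Z}/2[w_2', w_2'']$ with its standard monomial basis.

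For the spanning claim in weights $1$ and $2$, I would take the obvious $\mathbb{Z}/2$-basis of monomials for the ambient polynomial ring and reduce it using the weight-$1$ relation $w_2' w_3'' = w_2'' w_3'$ (and, in weight $2$, its multiples by $w_3'$ and $w_3''$). In weight $1$, the identity lets me rewrite any $w_2'^a w_2''^b w_3'$ with $b \geq 1$ as $w_2'^{a+1} w_2''^{b-1} w_3''$, leaving exactly the two families $w_2'^m w_3'$ and $w_2'^m w_2''^n w_3''$ appearing in the proposition. In weight $2$, multiplying the relation by $w_3'$ (respectively $w_3''$) allows me to eliminate $w_2'^a w_2''^b w_3'^2$ with $b \geq 1$ (respectively $w_2'^a w_2''^b w_3''^2$ with $a \geq 1$) in favour of a mixed monomial $w_2'^{a'} w_2''^{b'} w_3' w_3''$, which produces the claimed three families.

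For linear independence, I would apply $\bar\phi$ to each element of the proposed basis and observe that the images are pairwise distinct monomials in $\mathbb{Z}/2[w_2', w_2'', t_1]$; injectivity of $\bar\phi$ then finishes the argument. Concretely, in $N_1$ the images are $t_1 w_2'^{m+1}$ and $t_1 w_2'^m w_2''^{n+1}$, the two families being distinguished by whether the $w_2''$-exponent is zero or positive. In $N_2$ the three families map to $t_1^2 w_2'^{m+2}$, $t_1^2 w_2'^{m+1} w_2''^{n+1}$, and $t_1^2 w_2''^{n+2}$, separated by whether the $w_2'$- or the $w_2''$-exponent vanishes. I do not foresee any serious obstacle: the argument is essentially bookkeeping, and the only conceptual input is the observation that the weight-$3$ relation can be discarded in the range of interest, after which the already-established injectivity of $\bar\phi$ takes over.
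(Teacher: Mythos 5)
Your proposal is correct and follows essentially the same route as the paper: both reduce to $N_k=M_k$ for $k\le 2$ via the weight-$3$ observation on the second relation, and both use the injective weight-preserving homomorphism $\bar{\phi}$ to settle linear independence (the paper also uses $\bar{\phi}$ to identify the span, whereas you do the spanning step by direct monomial reduction with the relation $w_2'w_3''=w_2''w_3'$, which is only a minor bookkeeping difference). No gaps.
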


\begin{proof}
The weight of monomials in 
\[
\bar{\phi}(w_3'w_3''^2+w_3'^2 w_3'')=t_1^3 w_2' w_2''^2+t_1^3 w_2'^2 w_2''
\]
 is $3$. Therefore, the ideal of $M$ generated by 
\[
 w_3'w_3''^2+w_3'^2 w_3''
\]
is spanned by monomials of weight greater than or equal to $3$. Hence, we have. $N_i=M_i$ for $i=0,1,2$. 
It is clear that
\begin{align*}
\bar{\phi}(M_0)&=\mathbb{Z}/2\{ w_2'^m w_2''^n\}, \\
\bar{\phi}(M_1)&=\mathbb{Z}/2\{ t_1 w_2'^m w_2''^n\;|\; m+n \geq 1 \}, \\
\bar{\phi}(M_2)&=\mathbb{Z}/2\{ t_1^2 w_2'^m w_2''^n\;|\; m+n \geq 2 \}
\end{align*}
and that
\begin{align*}
\bar{\phi}(\mathbb{Z}/2\{ w_2'^mw_2''^n\} )&=\mathbb{Z}/2\{ w_2'^m w_2''^n\}, \\
\bar{\phi}(\mathbb{Z}/2\{ w_2'^m w_3', w_2'^m w_2''^n w_3''\} )&=\mathbb{Z}/2\{ t_1 w_2'^m w_2''^n\;|\; m+n \geq 1\}, \\
\bar{\phi}(\mathbb{Z}/2\{ w_2'^m w_3'^2, w_2'^m w_2''^n w_3' w_3'', w_2''^n w_3''^2 \})&=\mathbb{Z}/2\{ t_1^2 w_2'^m w_2''^n\;|\; m+n \geq 2\},
\end{align*}
where $m, n$ range over the set of nonnegative integers.
Since the ring homomorphism $\bar{\phi}$ is injective, 
we obtain the desired results.
\end{proof}


We need the following lemma on $N_k$ ($k\geq 3$) to show that $Q_m x_{13}=0$ for $m\geq 2$.

\begin{proposition}\label{proposition:3.2}
Suppose that $k\geq 3$. For $1\leq i \leq k-1$, $m\geq 0$, $n\geq 0$, we have
\[
w_2'^m w_2''^n w_3'^i w_3''^{k-i} = w_2''^{m+n}w_3' w_3''^{k-1}
\]
in $N_k$.
\end{proposition}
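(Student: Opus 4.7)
The plan is to handle the two defining relations of $N$ one at a time. Relation~2, which in $N$ reads $w_3'w_3''^2 = w_3''w_3'^2$, lets me redistribute the $w_3$-factors freely between $w_3'$ and $w_3''$. Relation~1, $w_2'w_3'' = w_2''w_3'$, lets me convert a $w_2'$ into a $w_2''$ at the cost of trading one $w_3''$ for a $w_3'$. The target monomial $w_2''^{m+n} w_3' w_3''^{k-1}$ has all the $w_2'$'s converted to $w_2''$'s and all but one of the $w_3$-factors sitting on $w_3''$, so the strategy is first to normalize the $w_3$-part to $w_3' w_3''^{k-1}$, and then to migrate the $w_2'$'s across one at a time.

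Step~1 (normalization of the $w_3$-part). I will first prove, using only relation~2, the auxiliary identity
\[
w_3'^i w_3''^{k-i} = w_3' w_3''^{k-1}
\]
in $N$ for every $k \geq 3$ and $1 \leq i \leq k-1$. The case $i=1$ is trivial. For $i \geq 2$, since $k - i \geq 1$ there is at least one $w_3''$ available, so I can extract a factor $w_3'^2 w_3''$ and replace it by $w_3' w_3''^2$, obtaining $w_3'^i w_3''^{k-i} = w_3'^{i-1} w_3''^{k-i+1}$; an easy induction on $i$ finishes the step. Multiplying the resulting identity by $w_2'^m w_2''^n$ immediately reduces the proposition to proving
\[
w_2'^m w_2''^n w_3' w_3''^{k-1} = w_2''^{m+n} w_3' w_3''^{k-1}.
\]

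Step~2 (migration of $w_2'$). I will prove the reduced identity by induction on $m$, the case $m = 0$ being tautological. For $m \geq 1$, the hypothesis $k \geq 3$ gives me a spare $w_3''$, so relation~1 yields
\[
w_2'^m w_2''^n w_3' w_3''^{k-1} = w_2'^{m-1} w_2''^{n+1} w_3'^2 w_3''^{k-2},
\]
and Step~1 applied with $i = 2$ (which requires exactly $2 \leq k-1$) rewrites the trailing factor back as $w_3' w_3''^{k-1}$. The inductive hypothesis applied to $m-1$ and $n+1$ then closes the argument.

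The only delicate point, and the sole role played by the hypothesis $k \geq 3$, is bookkeeping the inequalities $1 \leq i \leq k-1$: at each stage I need at least one $w_3'$ and at least two $w_3''$'s on hand so that relation~1 can be applied and its output renormalized via Step~1. Once the induction is set up so that these inequalities are preserved, the argument is essentially mechanical.
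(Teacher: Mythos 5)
Your proposal is correct and follows essentially the same route as the paper's own proof: first normalize $w_3'^i w_3''^{k-i}$ to $w_3'w_3''^{k-1}$ via the relation $w_3'^2w_3''=w_3'w_3''^2$, then move the $w_2'$ factors over one at a time via $w_2'w_3''=w_2''w_3'$, renormalizing after each step. The bookkeeping of where $k\geq 3$ is needed is also handled the same way.
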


\begin{proof}
For $i\geq 2$, we have
\begin{align*}
w_3'^i w_3''^{k-i}&=w_3'^2 w_3'' \cdot w_3'^{i-2} w_3''^{k-i-1} \\
&=w_3' w_3''^2 \cdot w_3'^{i-2} w_3''^{k-i-1}  \quad  (\because w_3'^2 w_3''=w_3'w_3''^2)\\
&=w_3'^i w_3''^{k-i}.
\end{align*}
Iterating this process, we have $w_3'^i w_3''^{k-i}=w_3'w_3''^{k-1}$.
For $m\geq 1$, we have
\begin{align*}
w_2'^m w_2''^n w_3' w_3''^{k-1} &= w_2' w_3'' \cdot w_2'^{m-1} w_2''^n w_3' w_3''^{k-2} \\
&=w_3' w_2'' \cdot w_2'^{m-1} w_2''^n w_3' w_3''^{k-2}   \quad (\because w_2' w_3''= w_3' w_2'') \\
&=w_2'^{m-1} w_2''^{n+1} w_3'^2 w_3''^{k-2} \\
&=w_2'^{m-1} w_2''^{n+1} w_3' w_3''^{k-1}  \quad (\because w_3'^2 w_3^{k-2}=w_3' w_3''^{k-1}).
\end{align*}
Hence, we have the desired result
$
w_2'^m w_2''^n w_3'^i w_3''^{k-i} = w_2''^{m+n}w_3' w_3''^{k-1}
$.
\end{proof}


\begin{remark}\label{remark:3.3}
With Proposition~\ref{proposition:3.2}, it is easy to find a basis for $N_k$. 
And we have the following.
\[ N_k =\mathbb{Z}/2\{ w_2'^m w_3'^k, w_2''^n w_3' w_3''^{k-1}, w_2''^n w_3''^k \;|\; m, n \geq 0 \}.
\]
\end{remark}


\begin{remark}\label{remark:3.4}
It is  easy to compute the Poincar\'{e} series 
\[
\dfrac{(1-t^5)(1-t^9)}{(1-t^2)^2(1-t^{3})^2}                                                                                                                                                                                                                                                                                                                                                                                                                                                                                                                                                                                                                                                                                          
\]
of $N$ since $w_2'w_3''+ w_2'' w_3'$, $w_3' w_3''^2 + w_3'' w_3'^2$ is a regular sequence.
To prove the linear independence of elements in Propositions~\ref{proposition:3.1} and \ref{proposition:3.2}, 
one may compute the Poincar\'{e} series of each $N_k$ and add them up to obtain the Poincar\'{e} series of $N$ above. \end{remark}



\begin{proposition}\label{proposition:3.5}
Let us define an element $x_{13}$ of degree $13$ in the mod $2$ cohomology of $BG$ by
\[
x_{13}:= B\pi_1^{*}(Q_1 w_2) w_2''^2 (w_2'^2 +w_2''^2).
\]
Then, $x_{13}$ is nonzero and for $m\geq 1$, we have
\[
Q_m x_{13}=0.
\]
\end{proposition}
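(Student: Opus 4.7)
The plan is to verify the non-vanishing of $x_{13}$ by expressing it in the $N_1$ basis of Proposition~\ref{proposition:3.1}, and then to compute $Q_m x_{13}$ using the Leibniz rule together with Lemma~\ref{lemma:2.1}. Since $Q_1 w_2 = w_2 w_3$ was recorded in Section~\ref{section:2}, naturality of $Q_1$ under $B\pi_1$ gives $B\pi_1^{*}(Q_1 w_2) = w_2' w_3'$, so
\[
x_{13} = w_2' w_3' \cdot w_2''^2 (w_2'^2 + w_2''^2)
       = w_2'^3 w_2''^2 w_3' + w_2' w_2''^4 w_3',
\]
an element of weight $1$ in $N$. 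Using the relation $w_2'' w_3' = w_2' w_3''$ of $N$ to trade every $w_3'$ for a $w_3''$, I would rewrite $x_{13}$ as $w_2'^4 w_2'' w_3'' + w_2'^2 w_2''^3 w_3''$, which is a sum of two distinct basis elements of $N_1$ by Proposition~\ref{proposition:3.1} and is therefore nonzero.

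For the Milnor operations, since $Q_m$ is a derivation satisfying $Q_m(a^2) = 0$ in characteristic $2$, applying the Leibniz rule to $x_{13}$ and discarding the vanishing contributions from the square factors $w_2''^2$ and $w_2'^2 + w_2''^2$ yields
\[
Q_m x_{13} = Q_m(w_2' w_3') \cdot w_2''^2 (w_2'^2 + w_2''^2).
\]
The case $m = 1$ is immediate, because $w_2' w_3' = Q_1 w_2'$ and $Q_1^2 = 0$. For $m \geq 2$, Lemma~\ref{lemma:2.1} together with naturality gives
\[
Q_m(w_2' w_3') = B\pi_1^{*}(Q_m Q_1 w_2) = g_m(w_2'^2, w_3'^2)\, w_3'^4,
\]
so the entire assertion reduces to verifying the identity $w_3'^4 \cdot w_2''^2 (w_2'^2 + w_2''^2) = 0$ in $N$.

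The main obstacle is this final identity, which I plan to verify by reducing both summands to a common monomial using the relations of $N$. Squaring $w_2'' w_3' = w_2' w_3''$ gives $w_2''^2 w_3'^2 = w_2'^2 w_3''^2$; invoking this in each summand produces $w_2'^2 w_2''^2 w_3'^4 = w_2'^4 w_3'^2 w_3''^2$ and $w_2''^4 w_3'^4 = w_2'^4 w_3''^4$. After one further application of $w_2' w_3'' = w_2'' w_3'$ in the second expression to introduce a factor $w_3'$, Proposition~\ref{proposition:3.2} with $k = 4$ collapses both to the single basis element $w_2''^4 w_3' w_3''^3$, so the two contributions cancel modulo $2$. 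This yields $Q_m x_{13} = 0$ for all $m \geq 1$ and completes the proof.
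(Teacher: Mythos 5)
Your proposal is correct and follows essentially the same route as the paper: rewrite $x_{13}$ in the $N_1$ basis of Proposition~\ref{proposition:3.1} to see it is nonzero, kill the square factors with the derivation property, reduce to $Q_mQ_1w_2$ via Lemma~\ref{lemma:2.1}, and collapse $w_3'^4 w_2''^2(w_2'^2+w_2''^2)$ to zero using the relations of $N$ and Proposition~\ref{proposition:3.2}. If anything, your final reduction is slightly more careful than the paper's, since you first apply $w_2''w_3'=w_2'w_3''$ to produce monomials with both $w_3'$ and $w_3''$ present, which is what the hypothesis $1\leq i\leq k-1$ of Proposition~\ref{proposition:3.2} literally requires.
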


\begin{proof}
First, we verify that $x_{13}$ is nonzero. 
Since $B\pi_1^{*}(Q_1 w_2)=w_2' w_3'$, we have
\begin{align*}
x_{13}&=w_2'^3 w_2''^2 w_3'+ w_2' w_2''^4 w_3'\\
&=w_2'^4 w_2'' w_3''+ w_2'^2 w_2''^3 w_3''
\\
&\not=0
\end{align*}
in $N_1$ by Proposition~\ref{proposition:3.1}.
Next, we compute $Q_mx_{13}$. Since $Q_m$ acts trivially on $w_2''^2(w_2'^2+w_2''^2)$, 
\begin{align*}
Q_m x_{13}&= B\pi_1^*(Q_m Q_1 w_2 ) w_2''^2 (w_2'^2 + w_2''^2).
\end{align*}
For $m=1$, since $Q_1Q_1=0$, we have $Q_1x_{13}=0$.
For $m\geq 2$, by Lemma~\ref{lemma:2.1}, we have
\begin{align*}
 B\pi_1^*(Q_m Q_1 w_2 ) w_2''^2 (w_2'^2 + w_2''^2)&=B\pi_1^{*}(g_m w_3^4)w_2''^2 (w_2'^2 + w_2''^2)
 \\
 &=B\pi_1^{*}(g_m) w_3'^4 w_2''^2 (w_2'^2 + w_2''^2).
\end{align*}
By Proposition~\ref{proposition:3.2}, we obtain
\begin{align*}
w_3'^4 w_2''^2 w_2'^2 &= w_2''^4 w_3' w_3''^3, \\
w_3'^4 w_2''^2 w_2''^2 &= w_2''^4 w_3' w_3''^3, 
\end{align*}
hence, we have 
\[
w_3'^4 w_2''^2 (w_2'^2 + w_2''^2)=0.
\]
Therefore, we obtain
$
Q_mx_{13}=0
$.
\end{proof}


\begin{bibdiv}

\begin{biblist}

\bib{adams-wilkerson-1980}{article}{
   author={Adams, J. F.},
   author={Wilkerson, C. W.},
   title={Finite $H$-spaces and algebras over the Steenrod algebra},
   journal={Ann. of Math. (2)},
   volume={111},
   date={1980},
   number={1},
   pages={95--143},
   issn={0003-486X},
   doi={10.2307/1971218},
}

\bib{antieau-williams-2014}{article}{
   author={Antieau, Benjamin},
   author={Williams, Ben},
   title={The topological period-index problem over 6-complexes},
   journal={J. Topol.},
   volume={7},
   date={2014},
   number={3},
   pages={617--640},
   issn={1753-8416},
   doi={10.1112/jtopol/jtt042},
}

\bib{antieau-2016}{article}{
   author={Antieau, Benjamin},
   title={On the integral Tate conjecture for finite fields and
   representation theory},
   journal={Algebr. Geom.},
   volume={3},
   date={2016},
   number={2},
   pages={138--149},
   issn={2313-1691},
   doi={10.14231/AG-2016-007},
}

		\bib{GM-2019}{article}{
   author={Garc\'{\i}a-Etxebarria, I\~{n}aki},
   author={Montero, Miguel},
   title={Dai-Freed anomalies in particle physics},
   journal={J. High Energy Phys.},
   date={2019},
   number={8},
   pages={003, 77},
   issn={1126-6708},
   doi={10.1007/jhep08(2019)003},
}

\bib{feshbach-1981}{article}{
   author={Feshbach, Mark},
   title={The image of $H\sp{\ast} (BG,\,{\bf Z})$ in $H\sp{\ast} (BT,\,{\bf
   Z})$ for $G$ a compact Lie group with maximal torus $T$},
   journal={Topology},
   volume={20},
   date={1981},
   number={1},
   pages={93--95},
   issn={0040-9383},
   doi={10.1016/0040-9383(81)90015-X},
}

\bib{gu-2019}{article}{
   author={Gu, Xing},
   title={The topological period-index problem over 8-complexes, I},
   journal={J. Topol.},
   volume={12},
   date={2019},
   number={4},
   pages={1368--1395},
   issn={1753-8416},
   doi={10.1112/topo.12119},
}

\bib{gu-2020}{article}{
   author={Gu, Xing},
   title={The topological period-index problem over 8-complexes, II},
   journal={Proc. Amer. Math. Soc.},
   volume={148},
   date={2020},
   number={10},
   pages={4531--4545},
   issn={0002-9939},
   doi={10.1090/proc/15112},
}

\bib{gu-2022}{article}{
   author={Gu, Xing},
   author={Zhang, Yu},
   author={Zhang, Zhilei},
   author={Zhong, Linan},
   title={The $p$-primary subgroups of the cohomology of $BPU_n$ in
   dimensions less than $2p+5$},
   journal={Proc. Amer. Math. Soc.},
   volume={150},
   date={2022},
   number={9},
   pages={4099--4111},
   issn={0002-9939},
   review={\MR{4446254}},
   doi={10.1090/proc/16000},
}

\bib{kameko-2015}{article}{
   author={Kameko, Masaki},
   title={On the integral Tate conjecture over finite fields},
   journal={Math. Proc. Cambridge Philos. Soc.},
   volume={158},
   date={2015},
   number={3},
   pages={531--546},
   issn={0305-0041},
   doi={10.1017/S0305004115000134},
}

\bib{kameko-2017}{article}{
   author={Kameko, Masaki},
   title={Representation theory and the cycle map of a classifying space},
   journal={Algebr. Geom.},
   volume={4},
   date={2017},
   number={2},
   pages={221--228},
   issn={2313-1691},
   doi={10.14231/AG-2017-011},
}

\bib{kameko-preprint}{article}{
   author={Kameko, Masaki},
   title={Nilpotent elements in the cohomology of the classifying space of a connected Lie group},
   journal={J. Topol. Anal. to appear, Preprint, arXiv:1906.04499},
   date={2019},
}

\bib{kono-yagita-1993}{article}{
   author={Kono, Akira},
   author={Yagita, Nobuaki},
   title={Brown-Peterson and ordinary cohomology theories of classifying
   spaces for compact Lie groups},
   journal={Trans. Amer. Math. Soc.},
   volume={339},
   date={1993},
   number={2},
   pages={781--798},
   issn={0002-9947},
   doi={10.2307/2154298},
}

\bib{totaro-1997}{article}{
   author={Totaro, Burt},
   title={Torsion algebraic cycles and complex cobordism},
   journal={J. Amer. Math. Soc.},
   volume={10},
   date={1997},
   number={2},
   pages={467--493},
   issn={0894-0347},
   doi={10.1090/S0894-0347-97-00232-4},
}

\bib{totaro-1999}{article}{
   author={Totaro, Burt},
   title={The Chow ring of a classifying space},
   conference={
      title={Algebraic $K$-theory},
      address={Seattle, WA},
      date={1997},
   },
   book={
      series={Proc. Sympos. Pure Math.},
      volume={67},
      publisher={Amer. Math. Soc., Providence, RI},
   },
   date={1999},
   pages={249--281},
   doi={10.1090/pspum/067/1743244},
}

\bib{tripathy-preprint}{article}{
   author={Tripathy, Arnav},
   title={Further counterexamples to the integral Hodge conjecture},
   journal={Preprint, arXiv:1601.06170},
   date={2016},
}

\bib{vavpetic-viruel-2005}{article}{
   author={Vavpeti\v{c}, Ale\v{s}},
   author={Viruel, Antonio},
   title={On the mod $p$ cohomology of $B{\rm PU}(p)$},
   journal={Trans. Amer. Math. Soc.},
   volume={357},
   date={2005},
   number={11},
   pages={4517--4532},
   issn={0002-9947},
   doi={10.1090/S0002-9947-05-03983-8},
}

\bib{wilkerson-1983}{article}{
   author={Wilkerson, Clarence},
   title={A primer on the Dickson invariants},
   conference={
      title={Proceedings of the Northwestern Homotopy Theory Conference},
      address={Evanston, Ill.},
      date={1982},
   },
   book={
      series={Contemp. Math.},
      volume={19},
      publisher={Amer. Math. Soc., Providence, RI},
   },
   date={1983},
   pages={421--434},
   doi={10.1090/conm/019/711066},
}
	
\end{biblist}
\end{bibdiv}

\end{document}